\def\@author#1{\g@addto@macro\elsauthors{\normalsize%
		\def\baselinestretch{1}%
		\upshape\authorsep#1\unskip\textsuperscript{%
			\ifx\@fnmark\@empty\else\unskip\sep\@fnmark\let\sep=,\fi
			\ifx\@corref\@empty\else\unskip\sep\@corref\let\sep=,\fi}%
		\def\authorsep{\unskip,\space}%
		\global\let\@fnmark\@empty
		\global\let\@corref\@empty
		\global\let\sep\@empty}%
	\@eadauthor={#1}
}
\def\dj{d\kern-0.4em\char"16\kern-0.1em}
\DeclareMathOperator{\md}{mod}
\newtheorem{thm}{Theorem}[section]
\newtheorem{lem}[thm]{Lemma}
\newtheorem{cor}[thm]{Corollary}
\newtheorem{conj}[thm]{Conjecture}
\theoremstyle{definition}
\newtheorem{example}[thm]{Example}
\newcommand{\ba}{\begin{array}}
\newcommand{\ea}{\end{array}}
\newcommand{\bc}{\begin{center}}
\newcommand{\ec}{\end{center}}
\journal{the journal.}
\begin{document}
	
\begin{frontmatter}
		
\title{\bf On joins of a clique and a co-clique as star complements in regular graphs}

\author{Yuhong Yang}
\ead{yuhong$\_{}$yangsu@163.com}
\address{College of Mathematics and Systems Science, Xinjiang University, Urumqi, Xinjiang 830046, China}
	
\author{Jianfeng Wang\corref{mycorrespondingauthor}}\cortext[mycorrespondingauthor]{Corresponding author}
\ead{jfwang@sdut.edu.cn}
\address{School of Mathematics and Statistics, Shandong University of Technology, Zibo 255049, China}
	
\author{Qiongxiang Huang}
\ead{huangqx@xju.edu.cn}
\address{College of Mathematics and Systems Science, Xinjiang University, Urumqi, Xinjiang 830046, China}
	
\author{Zoran Stani\'c}
\ead{zstanic@matf.bg.ac.rs}
\address{Faculty of Mathematics, University of Belgrade,Studentski trg 16, 11 000 Belgrade, Serbia}

\begin{abstract}
In this paper we consider $r$-regular graphs $G$ that admit the vertex set partition such that one of the induced subgraphs is the join of an $s$-vertex clique and a $t$-vertex co-clique and represents a star complement for an eigenvalue $\mu$ of $G$. The cases in which one of the parameters $s, t$  is less than 2 or $\mu=r$ are already resolved.
It is conjectured in [J. Wang, X. Yuan, L. Liu, Regular graphs with a prescribed complete multipartite graph as a star complement, Linear Algebra Appl.~579 (2019) 302--319] that if $s, t\geq 2$ and $\mu\neq r$, then $\mu=-2, t=2$ and $G=\overline{(s+1)K_2}$. For $\mu=-t$ we verify this conjecture to be true. We further study the case in which $\mu\neq-t$ and confirm the conjecture provided $t^2-4\mu^2t-4\mu^3=0$. For the remaining possibility we determine the structure of a putative counterexample and relate its existence to the existence of a particular 2-class block design. It occurs that the smallest counterexample would have 1265 vertices.
\end{abstract}

\begin{keyword}
Star complement \sep Star set \sep Regular graph\sep Block design
	
\MSC 05C50\sep 05B05
\end{keyword}

\end{frontmatter}
	
\section{Introduction}\label{se-1}

If $\mu$ is an eigenvalue of (the adjacency matrix $A(G)$ of) a finite simple graph $G$ with multiplicity $k$, then a \textit{star complement} for $\mu$ in $G$ is the induced subgraph $G-X$ such that $|X|=k$ and $\mu$ is not an eigenvalue of $G-X$. In this situation $X$ is called a \textit{star set} for $\mu$ in $G$.  The main properties of star complements can be found in \cite[Chapter~5]{Cvet}.
	
Graphs with prescribed star complements have been extensively studied in \cite{NE,Cvet1,Rame,Rowlinson2,Peter1,Peter2,Rowlinson,Zoran}. In particular, Jackon and Rowlinson~\cite{PS} characterized regular graphs with the complete bipartite graph $K_{2,5}$ as a star  complement, Asgharsharghi and Kiani~\cite{LA} characterized regular graphs with the complete tripartite graph $K_{r,r,r}$ as a star complement, Yuan et al.~\cite{Wang1} determined maximal graphs with $K_{1,t}$ in the role of a star complement for $\mu=-2$ and also described  regular graphs with the complete  bipartite $K_{2,t}$ graph as a star complement for any  eigenvalue. Trees and complete graphs as star complements for $1$ as the second largest eigenvalue are characterized  by Stani\' c \cite{Stan}.
		
We use $K_n$ and $nK_1$ to denote the clique (i.e.~the complete graph) and the co-clique (the graph without edges) of order $n$, respectively. The disjoint union of the graphs $G$ and $H$ is denoted by $G\cup H$. The join $G\nabla H$ is obtained by inserting an edge between every vertex of $G$ and every vertex of $H$. The complement of $G$ is denoted by $\overline{G}$. For the remaining terminology and notation, we refer the reader to \cite{Cvet,Zoran}.
		
The graph $K_s\nabla tK_1~(=\overline{sK_1\cup K_t})$ in the role of a star complement has received a great deal of attention in the recent years. For $t=1$ the star complement reduces to the complete graph $K_{s+1}$, and this case is completely resolved in \cite{Zoran1,Wang}.  Stani\' c~\cite{Zoran1} considered strongly regular graphs with this star complement,  proved that they do not exist for $s,t\geq 2$ and provide some examples for $s=1, t\geq 2$.   Rowlinson and Tayfeh-Rezaie~\cite{Rowlinson} characterized all regular graphs with $K_{1,t}$ as a star complement.  Wang et al.~\cite{Wang}  determined all $r$-regular graphs with $K_s\nabla tK_1$ as a star complement for $r$, and formulated the following conjecture.
		
\begin{conj}[\cite{Wang}]\label{con-1}
If an $r$-regular graph $G$ has $K_s\nabla tK_1$ $(s,t\geq 2)$ as a star complement for an  eigenvalue $\mu\neq r$, then $\mu=-2$, $t=2$ and $G=\overline{(s+1)K_2}$.
\end{conj}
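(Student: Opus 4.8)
The plan is to set the problem up through the Reconstruction Theorem for star complements (a basic property of star sets, cf.\ Chapter~7 of \cite{DC} and Chapter~5 of \cite{Cvet}): writing $A(G)=\left(\begin{smallmatrix}A_X&B^{\mathsf T}\\ B&C\end{smallmatrix}\right)$ with $C=A(H)$ and $H=K_s\nabla tK_1$, the fact that $X$ is a star set for $\mu$ is equivalent to $\mu\notin Spec(H)$ together with $\mu I-A_X=B^{\mathsf T}(\mu I-C)^{-1}B$. For $u\in X$ let $\mathbf{b}_u\in\{0,1\}^{s+t}$ be the column of $B$ recording the neighbours of $u$ in $H$, split as $\mathbf{b}_u=(\mathbf{p}_u,\mathbf{q}_u)$ between the clique part $K_s$ and the coclique part $tK_1$, and put $\alpha_u=\mathbf{p}_u^{\mathsf T}\mathbf{j}$, $\beta_u=\mathbf{q}_u^{\mathsf T}\mathbf{j}$. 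First I would invert $\mu I-C$. Because $H$ is invariant under permuting the $s$ clique vertices and the $t$ coclique vertices, $(\mu I-C)^{-1}$ has the block form $\left(\begin{smallmatrix}aI+bJ & cJ\\ cJ & dI+eJ\end{smallmatrix}\right)$ (here $J$ is an all-ones block of the appropriate size); solving the block equations gives $a=\tfrac1{\mu+1}$, $d=\tfrac1\mu$, $c=\tfrac1D$ with $D=\mu^2-(s-1)\mu-st$, together with explicit $b,e$. Since the eigenvalues of $H$ are $-1$ (multiplicity $s-1$), $0$ (multiplicity $t-1$) and the two roots of the quotient polynomial $p(\lambda)=\lambda^2-(s-1)\lambda-st$, and $D=p(\mu)$, the hypothesis $\mu\notin Spec(H)$ guarantees $\mu\neq 0,-1$ and $D\neq 0$, so every scalar above is well defined.

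The Reconstruction Theorem then supplies, for each $u\in X$, the quadratic \emph{norm} equation $\langle\mathbf{b}_u,\mathbf{b}_u\rangle=\mu$ and, for $u,v\in X$, the \emph{angle} equations $\langle\mathbf{b}_u,\mathbf{b}_v\rangle=-1$ or $0$ according as $u\sim v$ or $u\not\sim v$, where $\langle\mathbf{x},\mathbf{y}\rangle=\mathbf{x}^{\mathsf T}(\mu I-C)^{-1}\mathbf{y}$; written out in $(\alpha_u,\beta_u)$ the norm equation is a conic. The decisive extra ingredient is that $\mu$ is \emph{non-main}: its eigenvectors are $\bigl(\mathbf{x},(\mu I-C)^{-1}B\mathbf{x}\bigr)$, and orthogonality to $\mathbf{j}$ forces $\mathbf{j}_X+B^{\mathsf T}(\mu I-C)^{-1}\mathbf{j}_{V(H)}=\mathbf 0$, i.e.\ $\langle\mathbf{b}_u,\mathbf{j}_{V(H)}\rangle=-1$ for every $u$. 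Evaluating $(\mu I-C)^{-1}\mathbf{j}_{V(H)}$ collapses this to the single linear relation
\[
(\mu+t)\,\alpha_u+(\mu+1)\,\beta_u=-D\qquad(u\in X).
\]
I would also record the regularity data: all clique vertices of $H$ get the same number $\delta_1=r-s+1-t$ of neighbours in $X$ and all coclique vertices the same number $\delta_2=r-s$, whence $\sum_u\alpha_u=s\delta_1$, $\sum_u\beta_u=t\delta_2$, and each $u\in X$ has $X$-degree $r-\alpha_u-\beta_u$. The sign of $\mu+t$ is the natural case divider in the displayed relation.

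Case $\mu+t=0$. Here the relation reads $(\mu+1)\beta_u=-D=-t(t-1)$, forcing $\beta_u=t$ for every $u$ (each star-set vertex is joined to the whole coclique), so $\sum_u\beta_u=kt=t\delta_2$ gives $\delta_2=k=|X|=m(\mu)$ and $r=s+k$. At $\mu=-t$ one computes $b=0$, so with $\beta_u=t$ substituted the norm equation is linear in $\alpha_u$ and pins down the single value $\alpha_u=s-(t-1)^2$ for all $u$. Thus $X$ is homogeneous and $G_X$ is regular of degree $\rho=r-\alpha_u-\beta_u=k+(t-1)^2-t$. Since a graph on $k$ vertices has maximum degree at most $k-1$, the inequality $\rho\le k-1$ reads $(t-1)(t-2)\le 0$, which with $t\ge2$ forces $t=2$ and hence $\mu=-2$. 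Then $\rho=k-1$ (so $G_X=K_k$), $\sum_u\alpha_u=s\delta_1$ yields $s=k$, $\alpha_u=s-1$, $\beta_u=2$, and reassembling $A(G)$ from these data identifies $G$ as the cocktail-party graph $\overline{(s+1)K_2}$, completing this case.

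Case $\mu+t\neq 0$ (the main obstacle). Now the linear relation is a genuine two-variable equation, so the admissible attachment types $(\alpha_u,\beta_u)$ lie on the intersection of a line with the conic cut out by the norm equation: at most two types can occur. The difficulty is that the clean homogeneous structure above is lost — the edge-counts, the $X$-regularity and the angle equations all split according to how many vertices carry each type and how the two types are joined inside $X$. My plan is to bound the number of types, then combine integrality and the box constraints $0\le\alpha_u\le s$, $0\le\beta_u\le t$ with the two edge-counts $\sum_u\alpha_u=s\delta_1$, $\sum_u\beta_u=t\delta_2$ and the nonnegativity of the various pair-counts, aiming either to reach a numerical contradiction or to force $\beta_u\equiv t$ (hence $\mu=-t$, a contradiction). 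I expect this mixed-type bookkeeping, rather than any one inequality, to be the genuine obstruction: isolating the finitely many numerical possibilities for $(\mu,s,t)$ and eliminating each is the crux, and it is precisely this range that resists a complete resolution and can only be \emph{discussed}.
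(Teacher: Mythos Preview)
Your proposal is correct and follows essentially the same route as the paper: in the case $\mu+t=0$ both arguments use the non-main relation to force $\beta_u=t$, the norm equation to pin down $\alpha_u=s-(t-1)^2$, and then the regularity bound on the $X$-degree to obtain $(t-1)(t-2)\le 0$, hence $t=2$, $\mu=-2$ and $G=\overline{(s+1)K_2}$; the only cosmetic difference is that you invert $\mu I-C$ directly by symmetry whereas the paper goes via the minimal polynomial of $A(H)$. For $\mu+t\neq 0$ the paper, like you, does not resolve the case but only remarks that it is left for further work.
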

	
In this paper we prove that the conjecture holds for $\mu=-t$. For $\mu\neq-t$ we confirm the conjecture under the additional assumption that $t^2-4\mu^2t-4\mu^3=0$. For $t^2-4\mu^2t-4\mu^3\neq 0$ we prove that $\mu$ must be a positive integer, determine the structure of a putative counterexample and relate its existence to the existence of a particular 2-class block design. We also list the sets of feasible parameters of such a graph for $\mu\leq 800$. It occurs that the smallest counterexample would have 1265 vertices.
	
Section~\ref{sec:prel} is preparatory and mostly related to star complements. Our contribution is reported in Sections~\ref{sec:main} and~\ref{sec:rel}.

\section{Preliminaries}\label{sec:prel}

We fix some notation and recall some results related to star complements in graphs. For a subset $X$ of the vertex set $V(G)$ of a graph $G$, we write $G[X]$ to denote the graph induced by~$X$. The first result is the well-known Reconstruction Theorem.
	\begin{thm}[Theorem 5.1.7, \cite{Cvet}]\label{thm-1}
		Let $X$ be a set of $k$ vertices in a graph $G$ and suppose that~$G$ has adjacency matrix
		\begin{equation}\label{eq:RT}A(G)=
		\begin{pmatrix}
			A_{X} & B^\intercal  \\
			B & C \\
		\end{pmatrix},\end{equation}
	    where $A_{X}$ is the adjacency matrix of $G[X]$. We have:
		\begin{itemize}
			\item[(i)]  $X$ is a star set for $\mu$ in $G$ if and only if $\mu$ is not an eigenvalue of $G-X$ and
			\begin{equation}\label{equation-1}
				\mu I-A_{X}=B^\intercal (\mu I-C)^{-1}B.
			\end{equation}
		
			\item[(ii)] If $X$ is a star set for $\mu$, then the eigenspace of $\mu$ consists of vectors
			$\begin{pmatrix}
				\mathbf{x} \\
				(\mu I-C)^{-1}B\mathbf{x}
			\end{pmatrix},$ \mbox {for $\mathbf{x}\in \mathbb{R}^k$}.
		\end{itemize}
	\end{thm}
	
{With the notation of Theorem~\ref{thm-1}, let $H=G-X$. It is clear that $H$ is a subgraph of~$G$ induced by $\overline{X}=V(G)\backslash X$, with $|\overline{X}|=n-k$ and $A(H)=C$. For $u\in X$, denote by $\mathbf{b}_{u}$  the vector-column of $B$ corresponding to $u$. Obviously, $\mathbf{b}_{u}$ is the characteristic vector of the $H$-neighbourhood $N_{H}(u)$ of $u$.
	
	We define the bilinear form on $\mathbb{R}^{n-k}$ by  $\langle \mathbf{x}, \mathbf{y}\rangle=\mathbf{x}^\intercal (\mu I-A(H))^{-1}\mathbf{y}$. By direct computation we get
	$$\langle \mathbf{b}_{u}, \mathbf{b}_{v}\rangle=\left\{\begin{array}{rl}\mu & \mbox{if $u=v$,}\\
		-1& \mbox{if $u\sim v$,}\\
		0 &  \mbox{otherwise.}
	\end{array}\right.
	$$
	Obviously, for $u\neq v$, $\mathbf{b}_{u}\not=\mathbf{b}_{v}$ provided $\mu\notin\{0, -1\}$, which leads to the following result.
	
	\begin{lem}[\cite{Peter}]\label{lem-5}
		Let $X$ be a star set for $\mu$ in $G$. If $\mu\not=0$, then $N_{H}(u)$ $(u\in X)$ is non-empty and if $\mu\not\in\{-1,0\}$, then  $N_{H}(u)\neq N_{H}(v)$ for distinct $u, v\in X$.
	\end{lem}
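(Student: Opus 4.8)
The plan is to derive both assertions directly from the identities $\langle\mathbf{b}_u,\mathbf{b}_v\rangle\in\{\mu,-1,0\}$ recorded just above the statement, which themselves come from Eq.~(\ref{equation-1}) of Lemma~\ref{thm-1}; the only ingredient needed to make the form $\langle\cdot,\cdot\rangle$ well defined is that $\mu$ is not an eigenvalue of $H=G-X$, so that $\mu I-A(H)$ is invertible. The key dictionary I would use is that $\mathbf{b}_u$ is the characteristic vector of $N_H(u)\subseteq V(H)$, so $\mathbf{b}_u=\mathbf{0}$ exactly when $N_H(u)=\emptyset$, and $\mathbf{b}_u=\mathbf{b}_v$ exactly when $N_H(u)=N_H(v)$.

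For the first statement I would argue by contradiction: assuming $\mu\neq 0$ and $N_H(u)=\emptyset$ for some $u\in X$ forces $\mathbf{b}_u=\mathbf{0}$, whence $\mu=\langle\mathbf{b}_u,\mathbf{b}_u\rangle=\mathbf{0}^T(\mu I-A(H))^{-1}\mathbf{0}=0$, contradicting $\mu\neq 0$. Hence every $N_H(u)$ is non-empty.

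For the second statement, assuming $\mu\notin\{-1,0\}$ gives in particular $\mu\neq 0$, so non-emptiness of each $N_H(u)$ is already covered by the previous paragraph. For distinctness I would again argue by contradiction: if $N_H(u)=N_H(v)$ with $u\neq v$, then $\mathbf{b}_u=\mathbf{b}_v$, so $\mu=\langle\mathbf{b}_u,\mathbf{b}_u\rangle=\langle\mathbf{b}_u,\mathbf{b}_v\rangle$; but the latter equals $-1$ if $u\sim v$ and $0$ otherwise, forcing $\mu\in\{-1,0\}$, a contradiction. Hence the neighborhoods $N_H(u)$ $(u\in X)$ are pairwise distinct.

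I do not expect any genuine obstacle: once Eq.~(\ref{equation-1}) is available — which the discussion preceding the lemma already sets up via Lemma~\ref{thm-1} — each part is a one-line contradiction argument. The only subtlety worth checking is that $\mu I-A(H)$ is indeed invertible, equivalently $\mu\notin Spec(H)$, which holds because $H$ is the star complement and $X$ a star set; this is precisely the hypothesis of Lemma~\ref{thm-1}(i) and what makes the displayed values $\langle\mathbf{b}_u,\mathbf{b}_v\rangle$ meaningful in the first place.
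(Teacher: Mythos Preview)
Your proposal is correct and is essentially identical to the paper's own argument, which is given in the paragraph immediately preceding the lemma: both assertions are derived by contradiction from the identities $\langle\mathbf{b}_u,\mathbf{b}_u\rangle=\mu$ and $\langle\mathbf{b}_u,\mathbf{b}_v\rangle\in\{-1,0\}$ for $u\neq v$, using that $\mathbf{b}_u$ is the characteristic vector of $N_H(u)$.
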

	
	We conclude this section by the two more results taken from \cite{Cvet}.
	
	\begin{lem}[Proposition 5.2.1, \cite{Cvet}]\label{lem-55}
		Let $C$ be a square matrix with minimal polynomial
		$$m(x)=x^{d+1}+c_{d}x^{d}+c_{d-1}x^{d-1}+\cdots+c_1x+c_0.$$
		If $\mu$ is not an eigenvalue of $C$, then
		$$m(\mu)(\mu I-C)^{-1}=a_dC^d+a_{d-1}C^{d-1}+\cdots+a_1C+a_0I,$$
		where $a_d=1$ and for $0<i\leq d$,
		$$a_{d-i}=\mu^i+c_d\mu^{i-1}+c_{d-1}\mu^{i-2}+\cdots+c_{d-i+1}.$$
	\end{lem}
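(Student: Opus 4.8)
The plan is to exhibit $m(\mu)(\mu I-C)^{-1}$ as $q(C)$ for an explicit polynomial $q$ of degree $d$, obtained by dividing $m(x)-m(\mu)$ by $x-\mu$. First I would set $f(x):=m(x)-m(\mu)$ and observe that $\mu$ is a root of $f$, so $x-\mu$ divides $f(x)$; hence there is a unique polynomial $q(x)=a_dx^d+a_{d-1}x^{d-1}+\cdots+a_1x+a_0$ with $f(x)=(x-\mu)q(x)$, and since $f$ is monic of degree $d+1$ we get $\deg q=d$ with $a_d=1$. Next I would substitute the matrix $C$ for the variable $x$: as $m$ is the minimal polynomial of $C$ we have $m(C)=0$, so
$$-m(\mu)I=m(C)-m(\mu)I=(C-\mu I)q(C),$$
i.e. $m(\mu)I=(\mu I-C)q(C)$. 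Since $\mu$ is not an eigenvalue of $C$, the matrix $\mu I-C$ is nonsingular; left-multiplying by $(\mu I-C)^{-1}$ gives $m(\mu)(\mu I-C)^{-1}=q(C)=a_dC^d+a_{d-1}C^{d-1}+\cdots+a_1C+a_0I$, which already has the claimed shape.

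It then remains only to identify the coefficients $a_{d-i}$. Writing $m(x)=x^{d+1}+c_dx^d+\cdots+c_1x+c_0$ and expanding
$$(x-\mu)(a_dx^d+\cdots+a_0)=a_dx^{d+1}+\sum_{i=1}^{d}\bigl(a_{d-i}-\mu a_{d-i+1}\bigr)x^{d+1-i}-\mu a_0,$$
and then comparing coefficients with $f(x)=x^{d+1}+c_dx^d+\cdots+c_1x+(c_0-m(\mu))$, one reads off the Horner-type recursion $a_d=1$ and $a_{d-i}=\mu a_{d-i+1}+c_{d-i+1}$ for $1\le i\le d$. A routine induction on $i$ then produces the closed form $a_{d-i}=\mu^i+c_d\mu^{i-1}+c_{d-1}\mu^{i-2}+\cdots+c_{d-i+1}$ stated in the lemma. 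As a consistency check, the leftover constant-term equation $-\mu a_0=c_0-m(\mu)$ is automatically satisfied, because $\mu a_0+c_0=\mu^{d+1}+c_d\mu^d+\cdots+c_1\mu+c_0=m(\mu)$.

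I do not expect a genuine obstacle: the result is elementary linear algebra together with polynomial bookkeeping. The only points that need a little care are making sure that $q$ is monic of degree exactly $d$ (so that the term $C^d$ really appears with coefficient $1$), and carrying out the coefficient extraction without an index shift. If one wishes to bypass the induction entirely, an equivalent computation is to write $m(x)-m(\mu)=\sum_{j=1}^{d+1}c_j(x^j-\mu^j)$ with the convention $c_{d+1}=1$, use $x^j-\mu^j=(x-\mu)\sum_{\ell=0}^{j-1}x^{\ell}\mu^{j-1-\ell}$, and collect the coefficient of $x^{d-i}$; after reindexing the resulting double sum one recovers the same formula. Either route finishes the proof.
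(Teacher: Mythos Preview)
Your argument is correct and complete: the factorisation $m(x)-m(\mu)=(x-\mu)q(x)$ combined with $m(C)=0$ immediately gives $m(\mu)(\mu I-C)^{-1}=q(C)$, and your Horner recursion correctly identifies the coefficients $a_{d-i}$. There is no comparison to make, since the paper does not supply its own proof of this lemma but simply quotes it as Proposition~5.2.1 from \cite{Cvet}.
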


		\begin{lem}[Proposition 5.2.4,  \cite{Cvet}]\label{lem-4}
		If $X$ is a star set in an $r$-regular graph $G$ for an eigenvalue $\mu\neq r$, then $\langle\mathbf{b}_{u},\mathbf{j}\rangle=-1$ for all $u\in X$.
	\end{lem}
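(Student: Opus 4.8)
The plan is to read the conclusion directly off the explicit description of the eigenspace $\varepsilon(\mu)$ furnished by Lemma \ref{thm-1}(ii), combined with the very definition of a non-main eigenvalue. Recall that, with $H=G-X$ and $C=A(H)$, every vector of $\varepsilon(\mu)$ has the form $\mathbf{z}_{\mathbf{x}}=\bigl(\mathbf{x}^{T},\ \bigl((\mu I-C)^{-1}B\mathbf{x}\bigr)^{T}\bigr)^{T}$ for some $\mathbf{x}\in\mathbb{R}^{k}$, where $k=|X|$. Since $\mu$ is non-main, $\varepsilon(\mu)$ is orthogonal to the all-ones vector $\mathbf{j}\in\mathbb{R}^{n}$, which we split as $\mathbf{j}=(\mathbf{j}_{k}^{T},\mathbf{j}_{n-k}^{T})^{T}$ along the partition $V(G)=X\cup\overline{X}$.

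First I would impose $\mathbf{j}^{T}\mathbf{z}_{\mathbf{x}}=0$, which gives
$$\mathbf{j}_{k}^{T}\mathbf{x}+\mathbf{j}_{n-k}^{T}(\mu I-C)^{-1}B\mathbf{x}=0.$$
As this holds for every $\mathbf{x}\in\mathbb{R}^{k}$, the row vector $\mathbf{j}_{k}^{T}+\mathbf{j}_{n-k}^{T}(\mu I-C)^{-1}B$ must vanish identically. Extracting the coordinate indexed by a fixed $u\in X$, the $u$-th entry of $\mathbf{j}_{k}^{T}$ equals $1$, while the $u$-th entry of $\mathbf{j}_{n-k}^{T}(\mu I-C)^{-1}B$ equals $\mathbf{j}_{n-k}^{T}(\mu I-C)^{-1}\mathbf{b}_{u}$, because $\mathbf{b}_{u}$ is exactly the column of $B$ corresponding to $u$. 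Hence $\mathbf{j}_{n-k}^{T}(\mu I-C)^{-1}\mathbf{b}_{u}=-1$.

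Finally, since $C=A(H)$ is symmetric, so is $(\mu I-C)^{-1}$, and therefore
$$\mathbf{j}_{n-k}^{T}(\mu I-C)^{-1}\mathbf{b}_{u}=\mathbf{b}_{u}^{T}(\mu I-A(H))^{-1}\mathbf{j}_{n-k}=\langle\mathbf{b}_{u},\mathbf{j}\rangle,$$
using the bilinear form introduced after Lemma \ref{thm-1}. This yields $\langle\mathbf{b}_{u},\mathbf{j}\rangle=-1$ for all $u\in X$. There is no genuine obstacle in this argument: it is a one-step consequence of Lemma \ref{thm-1}(ii); the only mild points of care are that a linear identity valid for all $\mathbf{x}$ forces its coefficient vector to be zero, and that the symmetry of $(\mu I-C)^{-1}$ is precisely what lets us recognize the resulting scalar as $\langle\mathbf{b}_{u},\mathbf{j}\rangle$. $\hfill\square$
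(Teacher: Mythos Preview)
Your argument is correct and is exactly the approach the paper has in mind: it states just before the lemma that ``the next observation follows from Lemma~\ref{thm-1}(ii) immediately,'' and your proof simply unpacks that immediacy by pairing the parametrization of $\varepsilon(\mu)$ with orthogonality to $\mathbf{j}$. There is nothing to add or correct.
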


	\section{$K_s\nabla tK_1$ $(s,t\geq2)$ as a star complement in a regular graph}\label{sec:main}
	In this section  we consider a regular graph $G$ that has a star complement $H=K_s\nabla tK_1$ $(s,t\geq2)$ for an eigenvalue $\mu$. After a suitable labelling $A(G)$ can be expressed as in \eqref{eq:RT} along with
	\begin{equation}\label{X-eq-1}A(H)=\begin{pmatrix}
			J_{s\times s}-I_{s\times s} & J_{s\times t} \\
			J_{t\times s} & O_{t\times t}
		\end{pmatrix}.
	\end{equation}
	Now we use the same notations as in \cite{Wang}. Let $V(H)=V(K_s)\cup V(tK_1)=R_1\cup R_2\cup\cdots\cup R_s\cup V(tK_1)$, where $|R_i|=1$ for  $1\leq i\leq s$. A vertex $u\in X$ is of type $(c_1,c_2,\ldots,c_s,b)$ if it has $c_i\in\{0,1\}$ neighbours in  $R_i$ and $b$ neighbours in $V(tK_1)$. It is clear that $|N_{tK_1}(u)|=b$. If we set $\sum\limits_{i=1}^s c_i=a$, then we also have $|N_{K_s}(u)|=a$. We first quote a lemma which will be used in the sequel.

	\begin{lem}[Theorem~3.3, \cite{Wang}]\label{lem-3-1}
		Suppose that an $r$-regular graph $G$ contains the star complement $K_s\nabla tK_1$ $(s,t\geq2)$ for an eigenvalue $\mu$, with the corresponding star set $X$. If  all vertices in $X$ are of type $( c_1,c_2,\ldots,c_s,b)$ with $\sum\limits_{i=1}^sc_i=s-1$, then $\mu=-2$, $b=t=2$ and $G=\overline{(s+1)K_2}$.
	\end{lem}
	
	We now use Lemma~\ref{lem-55} to compute $m(\mu)(\mu I-A(H))^{-1}$. From (\ref{X-eq-1}) we have
	$$
	A(H)^2=\begin{pmatrix}
		(s+t-2)J_{s\times s}+I_{s\times s} & (s-1)J_{s\times t} \\
		(s-1)J_{t\times s} & sJ_{t\times t}
	\end{pmatrix}
	$$
	and
	$$
	A(H)^3=\begin{pmatrix}
		(s^2+2st-3s-2t+3)J_{s\times s}-I_{s\times s} & (s^2+st-2s+1)J_{s\times t} \\
		(s^2+st-2s+1)J_{t\times s} & (s^2-s)J_{t\times t}
	\end{pmatrix}.
	$$
	It follows that the minimal polynomial of $A(H)$ is given by
	$$m(x)=x(x+1)(x^2-(s-1)x-st)=x^4+(2-s)x^3+(1-s-st)x^2-stx.$$
	Since $\mu$ is not an eigenvalue of $H$, we have $\mu\notin\{0,-1\}$ and $\mu^2-(s-1)\mu-st\not=0$. Then, with the notation of Lemma \ref{lem-55}, we have
	$$
	\left\{\begin{array}{l}
		c_3=2-s, \\
		c_2=(1-s-st), \\
		c_1=-st, \\
		c_0=0,
	\end{array}\right.
	$$
	which leads to
	$$
	\left\{\begin{array}{l}
		a_3=1, \\
		a_2=\mu+s-2, \\
		a_1=\mu^2+(2-s)\mu+1-s-st, \\
		a_0=\mu^3+(2-s)\mu^2+(1-s-st)\mu-st=(\mu+1)(\mu^2-(s-1)\mu-st).
	\end{array}\right.
	$$
	Moreover, by regarding $A(H)$ as $C$ in  Lemma \ref{lem-55},  we obtain
	\begin{equation}\label{eq-6}
		\begin{aligned}
			m(\mu)(\mu I-A(H))^{-1}=&A(H)^3+(\mu+2-s)A(H)^2+(\mu^2+2\mu-s\mu+1-s-st)A(H)\\
			&+(\mu+1)(\mu^2-(s-1)\mu-st)I\\
			=&\begin{pmatrix}
				\alpha J_{s\times s}+\beta\mu I_{s\times s} &\delta J_{s\times t} \\
				\delta J_{t\times s} &\gamma J_{t\times t}+ \beta(\mu+1)I_{t\times t}
			\end{pmatrix},
		\end{aligned}
	\end{equation}
	where $\alpha=\mu^2+\mu t$, $\beta=\mu^2-(s-1)\mu-st$, $\gamma=(\mu s+s)$ and $\delta=(\mu^2+\mu)$.

	 If $u,v$ are some vertices of the star set $X$, then we suppose that $u$ is of type $(c_1,c_2,\ldots,c_s,b)$, where $\sum\limits_{i=1}^{s}c_i=a$, and $v$ is of type $(e_1,e_2,\ldots,e_s,f)$, where $\sum\limits_{i=1}^{s}e_i=e$. Let $N_{K_s}(u)=Y_1$, $N_{tK_1}(u)=Z_1$ and $N_{K_s}(v)=Y_2$, $N_{tK_1}(v)=Z_2$. Recall that $\mathbf{b}_u$ and $\mathbf{b}_v$ are the columns of $B$ corresponding to $u$ and $v$, respectively.  From (\ref{X-eq-1}) we see that $\mathbf{b}_u$ has the form $\mathbf{b}_u=(\mathbf{b}_{Y_{1}}^\intercal ,\mathbf{b}_{Z_{1}}^\intercal )^\intercal $, where $\mathbf{b}_{Y_1}$ and $\mathbf{b}_{Z_1}$ are the characteristic vectors of $Y_1$ and $Z_1$ (i.e.~they determine the $Y_1$-neighbourhood and $Z_1$-neighbourhood of $u$), respectively. Similarly, $\mathbf{b}_v=(\mathbf{b}_{Y_{2}}^\intercal ,\mathbf{b}_{Z_{2}}^\intercal )^\intercal $, where $\mathbf{b}_{Y_2}$ and $\mathbf{b}_{Z_2}$ are the characteristic vectors of $Y_2$ and $Z_2$, respectively. Then
	$$
	\left\{\begin{array}{l}
		|N_{K_s}(u)|=|Y_1|=a, \\
		|N_{tK_1}(u)|=|Z_1|=b, \\
		|N_{K_s}(v)|=|Y_2|=e, \\
		|N_{tK_1}(v)|=|Z_2|=f,
	\end{array}\right.
	$$
	and so
	$$
	\left\{\begin{array}{c}
		|N_H(u)|=|Y_1|+|Z_1|=a+b, \\
		|N_H(v)|=|Y_2|+|Z_2|=e+f.
	\end{array}\right.
	$$
We further denote $\rho_{s}=|N_{K_s}(u)\cap N_{K_s}(v)|$ and $\rho_t=|N_{tK_1}(u)\cap N_{tK_1}(v)|$, along with  $U=(\mu I-A_X)-B^\intercal (\mu I-A(H))^{-1}B$ and  $f(\mu;u,v)=m(\mu)U_{uv}$, where $U_{uv}$ stands for the $(u,v)$-entry of $U$. But, from (\ref{equation-1}) we know that $U$ is an all-0 matrix, which yields
	$$f(\mu;u,v)=m(\mu)U_{uv}=m(\mu)\big((\mu I-A_X)_{uv}-\mathbf{b}_u^\intercal  (\mu I-A(H))^{-1}\mathbf{b}_v\big)=0.$$
	Combining this with (\ref{eq-6}), we obtain
	\begin{equation*}
		\begin{aligned}
			-a_{uv} m(\mu)
			=&\,\mathbf{b}_u^\intercal  m(\mu)(\mu I-A(H))^{-1}\mathbf{b}_v\\
			=&\,(\mathbf{b}_{Y_1}^\intercal ,\mathbf{b}_{Z_1}^\intercal )\begin{pmatrix}
				\alpha J_{s\times s}+\beta\mu I_{s\times s} &\delta J_{s\times t} \\
				\delta J_{t\times s} &\gamma J_{t\times t}+ \beta(\mu+1)I_{t\times t}
			\end{pmatrix}\begin{pmatrix}
				\mathbf{b}_{Y_2} \\
				\mathbf{b}_{Z_2}
			\end{pmatrix}\\
			=&\,\begin{pmatrix}
				\mathbf{b}_{Y_1}^\intercal (\alpha J_{s\times s}+\beta\mu I_{s\times s})+\delta\mathbf{b}_{Z_1}^\intercal J_{t\times s} & \delta\mathbf{b}_{Y_1}^\intercal  J_{s\times t}+\mathbf{b}_{Z_1}^\intercal (\gamma J_{t\times t}+ \beta(\mu+1)I_{t\times t})
			\end{pmatrix}\begin{pmatrix}
				\mathbf{b}_{Y_2} \\
				\mathbf{b}_{Z_2}
			\end{pmatrix}\\
			=&\,\mathbf{b}_{Y_1}^\intercal (\alpha J_{s\times s}+\beta\mu I_{s\times s})\mathbf{b}_{Y_2}+\delta\mathbf{b}_{Z_1}^\intercal J_{t\times s} \mathbf{b}_{Y_2}+ \delta\mathbf{b}_{Y_1}^\intercal  J_{s\times t}\mathbf{b}_{Z_2}+\mathbf{b}_{Z_1}^\intercal (\gamma J_{t\times t}+ \beta(\mu+1)I_{t\times t})\mathbf{b}_{Z_2}\\
			=&\,\alpha ae+\beta\mu \rho_{s}+\delta be+\delta af+\gamma bf+\beta(\mu+1)\rho_{t}.
		\end{aligned}
	\end{equation*}
	Therefore,
	\begin{equation}\label{eq-22}
		\begin{aligned}
			f(\mu;u,v)=&\,-a_{uv} m(\mu)-\big(\alpha ae+\beta\mu \rho_{s}+ (be+ af)\delta+\gamma bf+\beta(\mu+1)\rho_{t}\big)\\
			=&\,(-a_{uv}\mu-\rho_{s}-\rho_{t})(\mu+1)(\mu^2-(s-1)\mu-st)\\
			&\,-(ae-\rho_{s}+be+af)\mu^2-(aet+\rho_{s}(s-1)+be+af+sbf)\mu\\
			&\,-st\rho_{s}-sbf=0.\\
		\end{aligned}
	\end{equation}
	Similarly, for $u=v$ we have
	$$f(\mu;u,u)=m(\mu)U_{uu}=m(\mu)\big((\mu I-A_X)_{uu}-\mathbf{b}_u^\intercal  (\mu I-A(H))^{-1}\mathbf{b}_u\big)=0.$$
	Combining this with (\ref{eq-6}),  we obtain
	\begin{equation*}
		\begin{aligned}
			\mu m(\mu)
			=&\,\mathbf{b}_u^\intercal  m(\mu)(\mu I-A(H))^{-1}\mathbf{b}_u\\
			=&\,\alpha a^2+\beta\mu a+\delta ab+\delta ab+\gamma b^2+\beta(\mu+1)b.
		\end{aligned}
	\end{equation*}
	Hence,
	\begin{equation}\label{eq-1}
		\begin{aligned}
			f(\mu;u,u)=&\,\mu m(\mu)-(\alpha a^2+\beta\mu a+\delta ab+\delta ab+\gamma b^2+\beta(\mu+1)b)\\
			=&\,\mu^5 + (2 - s)\mu^4 + (1 - b - s - st - a)\mu^3\\
			&\,+ (as-2b - 2ab + bs-st-a^2-a)\mu^2\\
			&\,+ (bs - 2ab-b-a^2t - b^2s + ast + bst)\mu\\
			&\,- sb^2 + stb=0.
		\end{aligned}
	\end{equation}

	The equality \eqref{eq-1} can also be in~\cite{Peter2}; here, we reproduce it for the sake of completeness.  Since $\mu\neq r$, from Lemma~\ref{lem-4} we have $\langle \mathbf{b}_u,\mathbf{j} \rangle=-1$. By multiplying both sides of this equality by $m(\mu)$, we get \begin{equation*}
		\begin{aligned}
			-m(\mu)
			=&\,\mathbf{b}_u^\intercal  m(\mu)(\mu I-A(H))^{-1}\mathbf{j}\\
			=&\,(\mathbf{b}_{Y_1}^\intercal ,\mathbf{b}_{Z_1}^\intercal )\begin{pmatrix}
				\alpha J_{s\times s}+\beta\mu I_{s\times s} &\delta J_{s\times t} \\
				\delta J_{t\times s} &\gamma J_{t\times t}+ \beta(\mu+1)I_{t\times t}
			\end{pmatrix}\begin{pmatrix}
				\mathbf{j}_s \\
				\mathbf{j}_t
			\end{pmatrix}\\
			=&\,\alpha sa+\beta\mu a+\delta sb+\delta ta+\gamma tb+\beta(\mu+1)b,
		\end{aligned}
	\end{equation*}
	which gives
	\begin{equation}\label{eq-3}
		a(\mu+t)+b(\mu+1)=s(\mu+t)-\mu(\mu+1),
	\end{equation}
	 equivalently, $b=\frac{(s-a)(\mu+t)}{\mu+1}-\mu$. Combining this with (\ref{eq-1}), we get
	\begin{equation}\label{ee-1}
		\begin{split}
			&\big((\mu^2-(s-1)\mu - st)((t +\mu)a^2 + (t + 2\mu - 2st - 2s\mu + t\mu + 2\mu^2)a\\& + x - st - 2s\mu + s^2t - 2s\mu^2 + s^2\mu + 3\mu^2 + 3\mu^3 + \mu^4 - st\mu)\big)/(\mu+ 1)=0.
		\end{split}
	\end{equation}
	Since $\mu^2-(s-1)\mu - st\not=0$ and $\mu\neq-1$ (since $-1$ is an eigenvalue of the star complement), by taking into account (\ref{ee-1}), we arrive at
	\begin{equation}\label{eq-4}
		\begin{split}
			&(t +\mu)a^2 + (t + 2\mu - 2st - 2s\mu + t\mu + 2\mu^2)a+\mu - st - 2s\mu\\
			&+ s^2t - 2s\mu^2 + s^2\mu + 3\mu^2 + 3\mu^3 + \mu^4 - st\mu=0.
		\end{split}
	\end{equation}
	We record this as the following lemma.
	\begin{lem}\label{lem-66}
		Let $K_s\nabla tK_1$ $(s,t\geq2)$ be a star complement for an eigenvalue $\mu$ in an $r$-regular graph $G$, and let $X$ denotes the corresponding star set. The parameter $|N_{K_s}(u)|=a$ satisfies the equation (\ref{eq-4}).
	\end{lem}

In what follows we first consider the case in which $\mu=-t$.
{It follows that $t + 2\mu - 2st - 2s\mu + t\mu + 2\mu^2=\mu(\mu+1)\not=0$.  Thus, the equality (\ref{eq-4}) is linear in $a$, which leads to
$a=-\mu^2 - 2\mu + s-1$.
On the other hand, from (\ref{eq-3}) we have $b=-\mu=t$. Combining this with Lemma~\ref{lem-66} we immediately get the following corollary.
}

{
\begin{cor}\label{a-b-cor-1}
Under the assumption of Lemma \ref{lem-66}, if $\mu=-t$ then
\begin{equation*}\label{eq-28}
\left\{\begin{array}{ll}
|N_{K_s}(u)|=a=-\mu^2 - 2\mu + s-1,\\
|N_{tK_1}(u)|=b=-\mu=t
\end{array}\right.
\end{equation*}
holds for all $u\in X$.
\end{cor}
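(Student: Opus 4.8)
The plan is to substitute the hypothesis $t+\mu=0$, i.e.\ $t=-\mu$, directly into the two identities obtained in the derivation of Lemma~\ref{lem-66} — namely Eq.~(\ref{eq-3}) and Eq.~(\ref{eq-4}) — and then cancel the resulting common factors using the standing facts $\mu\notin\{0,-1\}$ and $\mu^2-(s-1)\mu-st\neq0$, which hold because $\mu$ is not an eigenvalue of $H$ (whose minimal polynomial is $m(x)=x(x+1)[x^2-(s-1)x-st]$).

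First I would deal with $b$. Setting $\mu+t=0$ in Eq.~(\ref{eq-3}) annihilates the two terms carrying the factor $\mu+t$, leaving $b(\mu+1)=-\mu(\mu+1)$; dividing by $\mu+1\neq0$ gives $b=-\mu=t$ immediately. This value involves only $\mu$, hence is the same for every $u\in X$.

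Next I would treat $a$. Putting $t=-\mu$ into Eq.~(\ref{eq-4}) makes the leading term $(t+\mu)a^2$ vanish, so the relation becomes linear in $a$. A direct computation shows that its $a$-coefficient collapses to $\mu(\mu+1)$ and its constant term collapses to $\mu\bigl(1-s-s\mu+3\mu+3\mu^2+\mu^3\bigr)$. Dividing by $\mu\neq0$ and using the factorization $1-s-s\mu+3\mu+3\mu^2+\mu^3=(\mu+1)^3-s(\mu+1)=(\mu+1)\bigl((\mu+1)^2-s\bigr)$, the equation reads $(\mu+1)\bigl(a+\mu^2+2\mu+1-s\bigr)=0$; cancelling $\mu+1\neq0$ yields $a=s-1-\mu^2-2\mu=-\mu^2-2\mu+s-1$, again a quantity depending only on $s$ and $\mu$. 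One should also note that passing from Eq.~(\ref{ee-1}) to Eq.~(\ref{eq-4}) required $\mu^2-(s-1)\mu-st\neq0$, which remains valid here since for $t=-\mu$ this quantity equals $\mu(\mu+1)\neq0$.

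The argument involves no genuine difficulty beyond careful bookkeeping; the only delicate point is the arithmetic of substituting $t=-\mu$ in the degree-four constant term of Eq.~(\ref{eq-4}) and recognizing the factorization through $\mu+1$, which is precisely what forces the a priori quadratic relation for $a$ down to a single value. Since the closed forms for $a$ and $b$ contain only the fixed parameters $\mu$ and $s$, they are independent of the choice of $u\in X$, which is the assertion of the corollary.
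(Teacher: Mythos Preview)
Your proposal is correct and follows essentially the same route as the paper: substitute $t+\mu=0$ into Eq.~(\ref{eq-3}) to obtain $b=-\mu=t$, and into Eq.~(\ref{eq-4}) so that the $a^2$-term drops out and the remaining linear equation (with $a$-coefficient $\mu(\mu+1)\neq0$) forces $a=-\mu^2-2\mu+s-1$. Your version simply spells out the arithmetic and the factorization through $(\mu+1)$ that the paper leaves implicit.
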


We are now in position to prove a conditional resolution of Conjecture~\ref{con-1}.

\begin{thm}\label{lem-31} Conjecture~\ref{con-1} holds for $\mu=-t$.
\end{thm}
\begin{proof} Let $G$ be an $r$-regular graph satisfying the assumptions of the conjecture, along with $\mu=-t$. We need to prove that $G=\overline{(s+1)K_2}$.
	
	From Corollary~\ref{a-b-cor-1} we see that every vertex of $X$ is adjacent to all vertices of $tK_1$. In other words, $|N_X(w)|=|X|$ holds for every $w \in V(tK_1)$. Since $G$ is $r$-regular and $G-X=K_s\nabla tK_1$ is a star complement for $\mu$, we have
\begin{equation*}\label{bb-1}
r=d(w)=s+|X|.
\end{equation*}
Suppose that $|N_X(w')|= c$ for $w' \in V(K_s)$ and $|N_X(u)|=d\leq |X|-1$ for $u\in X$.  As above, we get
\begin{equation*}\label{bb-2}
r=d(w')=s-1+t+c,
\end{equation*}
while from the first equality of~Corollary~\ref{a-b-cor-1}, we obtain
\begin{equation*}\label{bb-3}
r=d(u)=a+b+d=-\mu^2 - 2\mu + s-1-\mu+d.
\end{equation*}
Combining the previous equalities, we get
\begin{equation}\label{bb-4}
|X|-t+1 =c=a+b+d-s+1-t=-\mu^2-2\mu+d,
\end{equation}
which leads to $-\mu^2-2\mu=|X|-t+1-d=|X|+\mu+1-d$. Hence, $-\mu^2-3\mu-1=|X|-d\geq 1$, and thus $\mu\in\{-1, -2\}$. Moreover, since  $\mu\neq -1$, we have $\mu=-2$ (and then $b=t=2$). It follows from Corollary~\ref{a-b-cor-1} that $a=s-1$, while from (\ref{bb-4}) we have $c=d=|X|-1$. Consequently, $G-X=K_s\nabla 2K_1$ and $X$ induces a clique.

 It follows from the previous computation that every $u\in X$ is adjacent to $s-1$ vertices of $K_s$ and both vertices of $2K_1$. On the other hand, by Lemma~\ref{lem-5}, we have $N_H(u)\not=N_H(v)$, for distinct $u, v\in X$. This implies that  $N_H(u)\cap V(K_s)\not=N_H(v)\cap V(K_s)$, and thus $|X|\leq \binom{s}{s-1}=s$. Taking into account that $c=|N_X(w')|=|X|-1$, we get that every $w'\in K_s$ is adjacent to $|X|-1$ neighbours of $X$. By counting the number of  edges between $X$ and $V(K_s)$  we get $(|X|-1)s=|X|(s-1)$, and so $|X|=s$. Therefore, $G$ is obtained from  $K_{2(s+1)}$ by deleting a perfect matching, i.e. $G=\overline{(s+1)K_2}$.
\end{proof}
}
	
 From this point we assume that $\mu\neq -t$. This  implies that the equation (\ref{eq-4}) is quadratic in $a$ with roots
	\begin{equation}\label{eq-29}
		\left\{\begin{aligned}
			a_1=s-\frac{(\mu+1)(2\mu+t+\sqrt{t^2-4\mu^2t-4\mu^3})}{2(\mu+t)}, \\
			a_2=s-\frac{(\mu+1)(2\mu+t-\sqrt{t^2-4\mu^2t-4\mu^3})}{2(\mu+t)}.
		\end{aligned}\right.
	\end{equation}
	 The next result  follows immediately from  (\ref{eq-3}) and (\ref{eq-29}).}
	\begin{cor}\label{a-b-cor-2}
		Let, under the assumptions of Lemma~\ref{lem-66},  $\mu\neq -t$.
			\begin{itemize}
				\item[(i)] If $t^2-4\mu^2t-4\mu^3=0$, then $|N_{K_s}(u)|=a=s-\frac{(\mu+1)(2\mu+t)}{2(\mu+t)}$ and $
				|N_{tK_1}(u)|=b=\frac{t}{2}$, where $a, b$ are defined in the beginning of this section.
			
				\item[(ii)] If $t^2-4\mu^2t-4\mu^3\not=0$,  then $|N_{K_s}(u)|=a_1$ or $a_2$, and $|N_{tK_1}(u)|=b_1$ or $b_2$, where the latter parameters are obtained by inserting $a_1, a_2$ of \eqref{eq-29} into \eqref{eq-3}.
			\end{itemize}
	\end{cor}

	The two cases that arise from the previous corollary are considered in the forthcoming Theorems~\ref{lem-32} and~\ref{lem-33}.

	\begin{thm}\label{lem-32}
		{If {$\mu\neq -t$} and $t^2-4\mu^2t-4\mu^3=0$, then there is no regular graph $G$ with the star complement $K_s\nabla tK_1$ $(s,t\geq2)$ for an eigenvalue $\mu$.}
	\end{thm}
	\begin{proof}
		Under the given assumptions the equation (\ref{eq-4}) has a single root
		\begin{equation}\label{eq-25}
			a_1=a_2=a=s-\frac{(\mu+1)(2\mu+t)}{2(\mu+t)}.
		\end{equation}
		
		Combining this with (\ref{eq-3}) we obtain that
		$b=\frac{t}{2}$ must be an integer.
		Observe now that $\mu$ is a root of  $f(x)=t^2-4x^2t-4x^3$ due to
		\begin{equation}\label{eq-26}
			t^2-4\mu^2t-4\mu^3=0.
		\end{equation}
		
		Suppose first  that  $f$ has a rational root $\mu$, which in fact must be an integer. From (\ref{eq-26}) we get $t=2\mu^2\pm\sqrt{4\mu^4+4\mu^3}=2\mu^2\pm2|\mu|\sqrt{\mu(\mu+1)}$ and  $\mu(\mu+1)\geq0$ (i.e. $\mu<-1$ or $\mu>0$ due to  $\mu\notin\{0, -1\}$). Since  $(\mu+1)^2>\mu(\mu+1)>\mu^2$ if $\mu>0$, and $(\mu+1)^2<\mu(\mu+1)<\mu^2$ if $\mu<-1$, we see that $\sqrt{\mu(\mu+1)}$ is not an rational number, which implies that $t$ is not an integer, a contradiction.
		
		Suppose now that $f$ has an irrational root $\mu$. We know that $t+\mu=\frac{t^2}{4\mu^2}$ due to (\ref{eq-26}). Hence,
			\begin{align}
				a=&\,s-\frac{(\mu+1)(2\mu+t)}{2(\mu+t)}\nonumber\\ \label{eq-9}
				=&\,s-\mu-1+\frac{t}{2}-\frac{t^2-t}{2(\mu+t)}\\ \label{eq-10}
				=&\,s-\mu-1+\frac{t}{2}-\frac{2\mu^2(t-1)}{t},
		\end{align}
		which gives  $\frac{2\mu^2(t-1)}{t}+\mu=s-1+\frac{t}{2}-a$. Hence,  $\frac{2\mu^2(t-1)}{t}+\mu$ is an integer, say $z$, where  $z=s-1+\frac{t}{2}-a$. Thus $2(t-1)\mu^2+t\mu-tz=0$ is a quadratic equation with integral coefficients, and therefore $\mu=l+g\sqrt{h}$, where $l=\frac{-t}{4(t-1)}, g=\pm\frac{1}{4(t-1)}, h=t^2+8(t-1)tz\in \mathbb{Q}$,  $g,h\not=0$ and $h$ is not an square because $\mu$ is irrational. Replacing for $\mu$ in (\ref{eq-9}), we get
		$$\begin{aligned}
			a=&\,s-\mu-1+\frac{t}{2}-\frac{t^2-t}{2(\mu+t)}\\
			=&\,s-l-g\sqrt{h}-1+\frac{t}{2}-\frac{(t^2-t)}{2(t+l+g\sqrt{h})}\\
			=&\,s-l-1+\frac{t}{2}-\frac{(t^2-t)(t+l)}{2\big((t+l)^2-g^2h\big)}+\bigg(\frac{(t^2-t)}{2\big((t+l)^2-g^2h\big)}-1\bigg)g\sqrt{h}.
		\end{aligned}
		$$
		It follows that the last term $\bigg(\frac{(t^2-t)}{2\big((t+l)^2-g^2h\big)}-1\bigg)g\sqrt{h}$ must be zero, and so
		\begin{equation}\label{eq-11}
			t^2-t=2\big((t+l)^2-g^2h\big).
		\end{equation}
		
		Similarly, replacing for $\mu$ in (\ref{eq-10}), we obtain
		$$\begin{aligned}
			a=&\,s-\mu-1+\frac{t}{2}-\frac{2\mu^2(t-1)}{t}\\
			=&\,s-l-g\sqrt{h}-1+\frac{t}{2}-\frac{2(t-1)}{t}(l^2+g^2h+2lg\sqrt{h})\\
			=&\,s-l-1+\frac{t}{2}-\frac{2(t-1)}{t}(l^2+g^2h)-g\bigg(1+\frac{4l(t-1)}{t}\bigg)\sqrt{h}.
		\end{aligned}$$
		It follows that $1+\frac{4l(t-1)}{t}=0$, and so
		\begin{equation}\label{eq-12}
			l=\frac{-t}{4(t-1)}.
		\end{equation}
		Moreover, from (\ref{eq-26}) we obtain $$\begin{aligned}
			0=&\,t^2-4\mu^2t-4\mu^3\\
			=&\,t^2-4\mu^2(t+\mu)\\
			=&\,t^2-4\big(l^3+l^2t+tg^2h+3lg^2h+g(3l^2+g^2h+2lt)\sqrt{h}\big),
		\end{aligned}
		$$
		which yields
		\begin{equation*}\label{eq-13}
			3l^2+g^2h+2lt=0.
		\end{equation*}
		Combining this with (\ref{eq-11}) and (\ref{eq-12}), we get
		\begin{equation*}\label{ttt-1}
			2t^4-6t^3+3t^2+2t=0.
		\end{equation*}
		The latter equation has roots: $0$, $2$, $\frac{1}{2}(1+\sqrt{3})$ and $\frac{1}{2}(1-\sqrt{3})$. Therefore, $t=2$ since it is an integer. Replacing for $t$ in  (\ref{eq-12}) and (\ref{eq-13}), we obtain $l=-\frac{1}{2}$ and $g^2h=\frac{5}{4}$, and so $\mu=l+g\sqrt{h}=-\frac{1}{2}\pm \frac{\sqrt{5}}{2}$. However, from  (\ref{eq-25}) we have $a=s-1$, which implies $\mu=-2$ by Lemma~\ref{lem-3-1}, a contradiction.
	\end{proof}

 	\begin{figure}
	\centering
	\includegraphics[width=90mm,angle=0]{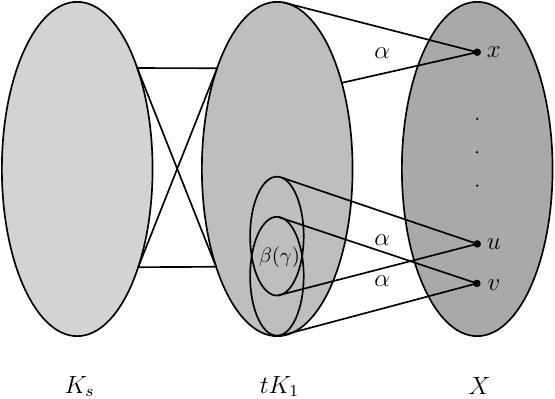}
	\caption{A sketch of an  $Y$-graph $G(r; \alpha, \beta, \gamma)$.}\label{fig-1}
\end{figure}

Evidently, the result of the previous theorem confirms Conjecture~\ref{con-1} under the assumption that $t^2-4\mu^2t-4\mu^3=0$.

Let further $G=G(r; \alpha, \beta, \gamma)$ be a putative $r$-regular graph with vertex set partition $V(G)=U\cup X$ satisfying the following conditions:
	
\begin{itemize}
	\item $G[U]=K_s\bigtriangledown tK_1$;
	\item For  $x\in X$, $|N_{G}(x)\cap V(K_s)|=0$ and $|N_{G}(x)\cap V(tK_{1})|=\alpha$;
	\item For each pair of vertices $u, v\in X$,
	$$
	|N_{tK_1}(u)\cap N_{tK_1}(v)|=\left\{\begin{array}{ll}
		\beta & \mbox{ if~\, $u\sim v$},\\
		\gamma  & \mbox{ if~\, $u\not\sim v$}.
	\end{array}\right.
	$$	
\end{itemize}
	
	We call  $G$ the \textit{$Y$-graph} with parameters $r,\alpha,\beta$ and $\gamma$. It is sketched in Fig.~\ref{fig-1}. We remark  that for each  $x\in X$ we have $|N_{X}(x)|=r-\alpha$, for each $w\in V(tK_1)$ we have $|N_{X}(w)|=r-s$ and the order of a $Y$-graph is $|X|+s+t$.

	\begin{thm}\label{lem-33}
		Suppose that an $r$-regular graph $G$ of order $n$  contains $K_s\nabla tK_1$ $(s,t\geq2)$ as a star complement for an eigenvalue $\mu$, with the corresponding star set $X$. If {$\mu\neq -t$} and $t^2-4\mu^2-4\mu^3\not=0$, then $G$ is a $Y$-graph $G(r;\alpha,\beta,\gamma)$ such that
			\begin{equation}\label{1set}
			\left\{\begin{aligned}
				r=&\,\frac{\mu^2(\mu+1)^2+(\mu+1-s)\big(s^2+(\mu+1)(\mu-s)\big)}{s(\mu+1-s)},\\
				\alpha=&\,\mu^2+\frac{s\mu^2}{\mu+1-s},\\
				\beta=&\,\frac{\mu(\mu+1)(s-1)}{\mu+1-s},\\
				\gamma=&\,\frac{s\mu^2}{\mu+1-s}.
			\end{aligned}\right.
			\end{equation}
			Moreover,
			\begin{equation}\label{2set}
			\left\{\begin{aligned}
				|N_{X}(u)|=&\,s-(\mu+1)+\frac{\mu(\mu+1)^2}{s},\ \ \mbox{for $u\in X$,}\\
				|N_{X}(w)|=&\,\frac{\mu^2(\mu+1)^2+(\mu+1-s)(\mu+1)(\mu-s)}{s(\mu+1-s)}, \ \ \mbox{for $w\in V(tK_1)$,}\\
				t=&\,\frac{\mu\big(\mu(\mu+1)^2+(\mu+1-s)^2\big)}{s(\mu+1-s)},\\
				|X|=&\,\frac{\big(\mu(\mu+1)^2+(\mu+1-s)^2\big)\big(s+(\mu-1)(\mu+1)^2+(\mu+1-s)^2\big)}{\mu s^2(\mu+1-s)},\\
				n=&\,\frac{\big((\mu+1)^2+s(\mu-1)\big)\big(\mu^4+3\mu^3-3(s-1)\mu^2+(3s^2-4s+1)\mu-s^3+2s^2-s\big)}{\mu s^2(\mu+1-s)}.
			\end{aligned}\right.
			\end{equation}
	\end{thm}
	\begin{proof} This is a considerably long proof and we begin with a short concept. We divide the proof into the 3 parts.
		
		\begin{itemize}
			\item In Part 1 we perform some initial computation; in particular,  we express $a_1, a_2, b_1, b_2$ and~$t$, and show that $\mu$  is an integer satisfying $\mu+t>0$.

			\item In Part 2 we eliminate the possibility that $\mu$ is negative.
			
			\item In Part 3 we deal with $\mu$ being a positive integer. In an intermediate step we prove that for distinct $u, v\in X$, there must be $a=|N_{K_s}(u)\cap N_{K_s}(v)|=0$. Then we show that a putative graph satisfying the assumptions of the statement must be a $Y$-graph and compute the parameters of \eqref{1set} and \eqref{2set}.
		\end{itemize}

{\flushleft\bf Part 1.}	By solving the equation (\ref{eq-4}) under the assumptions given in the formulation of this statement, we get the roots
		\begin{equation}\label{eq-7}
			\left\{\begin{aligned}
				a_1=&\,s-\frac{(\mu+1)(2\mu+t+\sqrt{t^2-4\mu^2t-4\mu^3})}{2(\mu+t)}, \\
				a_2=&\,s-\frac{(\mu+1)(2\mu+t-\sqrt{t^2-4\mu^2t-4\mu^3})}{2(\mu+t)}.
			\end{aligned}\right.
		\end{equation}
		On the other hand, (\ref{eq-3}) gives $b=\frac{(s-a)(\mu+t)}{(\mu+1)}-\mu$. Replacing $a$ with $a_1$ and then with $a_2$, we get the following two possibilities for $b$:
		\begin{equation*}\label{eq-8}
			\left\{\begin{aligned}
				b_1=&\,\frac{t+\sqrt{t^2-4\mu^2t-4\mu^3}}{2},\\
				b_2=&\,\frac{t-\sqrt{t^2-4\mu^2t-4\mu^3}}{2}.
			\end{aligned}\right.
		\end{equation*}
		Obviously, $\sqrt{t^2-4\mu^2t-4\mu^3}$ is a non-negative integer because $b_1, b_2$ are integers, and so we may set
		\begin{equation}\label{tt-0}
		\sqrt{t^2-4\mu^2t-4\mu^3}=p\in \mathbb{N}.\end{equation}

		From  (\ref{eq-7}) we obtain  $a_2-a_1=\frac{(\mu+1)\sqrt{t^2-4\mu^2t-4\mu^3}}{\mu+t}=\frac{p(\mu+1)}{\mu+t}$. We set $\frac{p(\mu+1)}{\mu+t}=q$. Evidently, $q$ is rational since $a_1,a_2$ are. Moreover, $\mu$ is also rational, since for otherwise from $p(\mu+1)=q(\mu+t)$, we have $p-qt=(q-p)\mu$, and thus $p=q$, $\mu+1=\mu+t$, which leads to the impossible scenario $t=1$. We further have $\mu\in \mathbb{Z}$ since it is an algebraic integer.
		
		 Next, from (\ref{eq-3}) we have $a=s-\frac{(\mu+1)(\mu+b)}{\mu+t}$, which means that $\frac{(\mu+1)(\mu+b)}{\mu+t}\geq0$ because $a\leq s$. Consequently, it holds
		\begin{equation}\label{eq-35}
			\mu+t>0,
		\end{equation}
		as for otherwise we would have $\mu+b\leq \mu+t<0$, $\mu+1< \mu+t<0$, and then $\frac{(\mu+1)(\mu+b)}{\mu+t}<0$, which contradicts the previous conclusion.

			{\flushleft\bf Part 2.} Here we eliminate the possibility that $\mu$ is negative. By way of contradiction we have $\mu\leq -2$ (as $\mu$ is an integer distinct from $-1$). We first notice that $2\mu+t>0$. Namely, if $2\mu+t\leq 0$ then $t\leq-2\mu$, and so $t^2\leq4\mu^2$. By  (\ref{eq-35}), we get that $\mu+t\geq1$ is an integer, and then from  (\ref{tt-0}) we obtain  $$0<p^2=t^2-4\mu^2t-4\mu^3= t^2-4\mu^2(t+\mu)\leq4\mu^2-4\mu^2(t+\mu)\leq0,$$
		a contradiction.

		From $2\mu+t>0$ and the first equality of (\ref{eq-7}), we see that $$s-a_1=\frac{(\mu+1)(2\mu+t+\sqrt{t^2-4\mu^2t-4\mu^3})}{2(\mu+t)}<0,$$
		since $2\mu+t+\sqrt{t^2-4\mu^2t-4\mu^3}>0$ and $2(\mu+t)>0$, a contradiction.
		
		Similarly, by taking into account the second equality of (\ref{eq-7}), we get $$s-a_2=\frac{(\mu+1)(2\mu+t-\sqrt{t^2-4\mu^2t-4\mu^3})}{2(\mu+t)}.$$
		On the other hand, we  also have $2\mu+t-\sqrt{t^2-4\mu^2t-4\mu^3}>0$, since for  otherwise we would have $0<2\mu+t\leq\sqrt{t^2-4\mu^2t-4\mu^3}$, which gives  $\mu\in \{0, -1\}$. Thus, $s-a_2<0$, which is impossible.

		{\flushleft\bf Part 3.} Here we assume that $\mu$ is a positive integer. It follows that $q=\frac{p(\mu+1)}{\mu+t}>0$.
		Since $t>0$, we have $t=2\mu^2+\sqrt{4\mu^4+4\mu^3+p^2}$ from (\ref{tt-0}), and therefore
		$$
			0=p(\mu+1)-q(\mu+t)=p(\mu+1)-q(\mu+2\mu^2+\sqrt{4\mu^4+4\mu^3+p^2}),
		$$
		which gives  $p(\mu+1)-q(\mu+2\mu^2)=q\sqrt{4\mu^4+4\mu^3+p^2}>0$. We claim that at least one of $a_1, a_2$ is not an integer. Indeed, by assuming that
		$a_1,a_2\in \mathbb{Z}$, we immediately get $q\in \mathbb{Z}$. Further, from (\ref{eq-7}) we obtain $a_1=s-\frac{(\mu+1)(2\mu+t+p)}{2(t+\mu)}$, which  gives  $\frac{q\mu}{p}=2(s-a_1)-\mu-1-q$. Now since $\frac{q\mu}{p}$ is a positive integer (due to $q,p,\mu>0$), we have  $p\leq q\mu$ and $p(\mu+1)-q(\mu +2\mu^2)\leq q\mu(\mu+1)-q(\mu+2\mu^2)=-q\mu^2<0$, a contradiction.
		
		Therefore, exactly one of $a_1, a_2$ is an integer, which leads to the following settings: If $a_1$ is an integer, then $a=a_1$ and $b=b_1$, otherwise $a=a_2$ and $b=b_2$. We also have that all vertices in $X$ have $a$ neighbours in $K_s$ and $b$ neighbours in $tK_1$ (since  $u\in X$ is chosen arbitrarily).
		
		Next, taking into account regularity of $G$ and conditions $s,t\geq 2$, we easily conclude that $|X|\geq 2$. Let $v\in X$, $v\not=u$, and suppose that $v$ is of type $(e_1,e_2,\ldots,e_s,f)$. Then $\sum\limits_{i=1}^{s}e_i=a$ and $f=b$. Let $\rho_{s}=|N_{K_s}(u)\cap N_{K_s}(v)|$ and $\rho_{t}=|N_{tK_1}(u)\cap N_{tK_1}(v)|$.  From (\ref{eq-22}) we obtain
						$(-a_{uv}\mu-\rho_{s}-\rho_{t})(\mu+1)\big(\mu^2-(s-1)\mu-st\big)
				=(a^2-\rho_{s}+2ab)\mu^2+\big(a^2t+\rho_{s}(s-1)+2ab+sb^2\big)\mu+st\rho_{s}+sb^2$.
			This equality leads to
		\begin{equation}\label{eq-23}
			\rho_t=-a_{uv}\mu-\frac{\rho_s\mu}{\mu+1}-\frac{(a^2+2ab)\mu^2+(a^2t+2ab+sb^2)\mu+sb^2}{(\mu+1)(\mu^2-(s-1)\mu-st)}.
		\end{equation}
		 We shall return to the previous equality soon. At this point, by expressing $t$ from (\ref{eq-3}) we get
		 \begin{equation}\label{eq-18}
		 	t=-\mu+\frac{(b+\mu)(\mu+1)}{s-a},
		 \end{equation}
	where $s-a>0$ (since  $s=a$ implies $b=-\mu<0$). Replacing for $t$ in (\ref{eq-1}), we get
		\begin{equation*}\label{eq-30}
			\frac{(\mu+1)(bs+a\mu)(-\mu^3-\mu^2+b\mu+b+ab-bs)}{s-a}=0,
		\end{equation*}

		Recall that $\mu\neq -1$, while by Lemma~\ref{lem-5} we know that $a=0$ and $b=0$ cannot simultaneously hold. Thus,  $$0=-\mu^3-\mu^2+b\mu+b+ab-bs=-\mu^2(\mu+1)+b(\mu+1+a-s),$$
		which  can also be written as
		\begin{equation}\label{eq-17}
			b=\frac{\mu^2(\mu+1)}{\mu+1+a-s}=\mu^2+\frac{\mu^2(s-a)}{\mu+1+a-s}
		\end{equation}
		because  $-\mu^2(\mu+1)\not=0$ and then $\mu+1+a-s\not=0$.
		Now by combining (\ref{eq-18}) and (\ref{eq-17}), we obtain
		\begin{equation}\label{eq-21}
				t=\frac{-\mu\big(\mu(\mu+1)^2+(\mu+1+a-s)^2\big)}{(a-s)(\mu+1+a-s)}.
		\end{equation}
		If $u\sim v$,  then we have $a_{uv}=1$ in (\ref{eq-23}) and, together with (\ref{eq-17}) and (\ref{eq-21}), this leads to
		\begin{equation}\label{eq-24}
			\rho_t=\frac{\mu\big((a-\rho_s-\mu-1)(\mu+1+a-s)+\mu(\mu+1)(s-a)\big)}{(\mu+1)(\mu+1+a-s)}.
		\end{equation}
		In a similar way, for $u\not\sim v$ we get
		\begin{equation}\label{eq:new}
			\rho_t=\frac{\mu\big(a-\rho_s)(\mu+1+a-s)+\mu(\mu+1)(s-a)\big)}{(\mu+1)(\mu+1+a-s)}.
		\end{equation}
		
		We now claim that $a=\rho_s=0$ is the unique possibility. In what follows we first show that $\rho_t$ cannot be an integer unless $a=\rho_s$. Assume that $\rho_t$ is an integer when $u\sim v$. From (\ref{eq-24}) we get
		\begin{equation}\label{eq-32}
			\mu\big((a-\rho_s-\mu-1)(\mu+1+a-s)+\mu(\mu+1)(s-a)\big)\equiv 0~ \big(\md~  (\mu+1)(\mu+1+a-s)\big).
		\end{equation}
		By virtue of (\ref{eq-17}), we have that $\frac{\mu^2(s-a)}{\mu+1+a-s}=b-\mu^2$ is an integer due to $b,\mu^2\in \mathbb{Z}$. Then  it holds
		\begin{equation*}\label{eq-31}
			(\mu+1)\mu^2(s-a)\equiv 0~\big(\md~(\mu+1)(\mu+1+a-s)\big).
		\end{equation*}
		Also, it is clear that
		\begin{equation}\label{eq-33}
			\mu(-\mu-1)(\mu+1+a-s)\equiv 0 ~\big(\md~ (\mu+1)(\mu+1+a-s)\big).
		\end{equation}
		Combining  (\ref{eq-32})--(\ref{eq-33}), we deduce that $\mu(a-\rho_s)(\mu+1+a-s)\equiv 0~\big(\md~ (\mu+1)(\mu+1+a-s)\big)$, i.e. $\mu(a-\rho_s)\equiv 0~\big(\md~(\mu+1)\big)$. Since $\mu$ and $\mu+1$ are coprime, we obtain that
		$$(a-\rho_s)\equiv 0~\big(\md~ \mu+1\big),$$
		which is equivalent to
		\begin{equation}\label{eq-34}
			(s-a)-(s-2a+\rho_s)\equiv 0~\big(\md~ \mu+1\big).
		\end{equation}
		We have $0\leq |V(K_s)\backslash (N_{K_s}(u)\cup N_{K_s}(v))|=s-2a+\rho_s\leq s-a$ due to $\rho_s\leq a$. Moreover, it follows from  (\ref{eq-21}) that $0<\mu+1+a-s$, i.e.~$s-a<\mu+1$ since $t$ is a positive integer, and this leads to the conclusion that (\ref{eq-34}) is possible only if $s-a=s-2a+\rho_s$, i.e.~$a=\rho_s$. The other possibility for $\rho_t$ (that is $u\not\sim v$) is considered in a very similar way.
		
		Therefore, all vertices in $X$ share the same neighbourhood in $K_s$. On the contrary, all vertices in $K_s$ have the same number of neighbours in $X$ since $G$ is regular, which implies that $a=0$ or $a=s$. Recalling from the previous part of this proof that $a\not=s$, we get $\rho_s =a=0$, as desired.

		Now, by taking an arbitrary vertex $w'\in V(K_s)$, we see  that
		$$r=d(w')=s-1+t=\frac{\mu^2(\mu+1)^2+(\mu+1-s)\big(s^2+(\mu+1)(\mu-s)\big)}{s(\mu+1-s)},$$
		which is the the first parameter of \eqref{1set}. The remaining three follow by setting $a=0$ in \eqref{eq-17}, \eqref{eq-24} and \eqref{eq:new}, respectively. The equalities of \eqref{1set} assure that $G$ is a $Y$-graph with desired parameters.
		
		The first two  parameters of \eqref{2set} are computed by replacing for $r$ in $|N_{X}(u)|=r-a-b=r-b$ and $|N_{X}(w)|=r-s$, the third follows by setting $a=0$ in \eqref{eq-21}, and the remaining two are computed from $|X|=\frac{(r-s)t}{b}=\frac{t(t-1)}{b}$ and $n=|X|+s+t$.
		
		The proof is complete. \end{proof}
		
		
%
%

	\begin{table}[t]
		{\small\footnotesize
			\begin{tabular}{ccccccccc}
				\toprule
				$\mu$   & $s$ &  $t$ &  $|X|$ & $r$ & $\alpha$ & $\beta$ & $\gamma$ & $n$\\
				\midrule
				9 & 4 &  351 &  910 &354 & 135 & 45 & 54 & 1265\\
				32 & 9 &  5248 &  19557 &5256 & 1408 &352 & 384 & 24814\\
				54 & 10 &  19845 &  110495 & 19854 & 3564 & 594 & 648 & 130350\\
				64 & 25 &  17408 &  45526 & 17432 & 6656 & 2496 & 2560 & 62959\\
				75 & 16 &  34125 &  163436 & 34140 & 7125 & 1425 & 1500 & 197577 \\
				98 & 22 &  55909 &  253139 & 55930 & 12348 & 2646 & 2744 & 309070 \\
				144 & 25 &  146016 &  850915 & 146040 & 25056 & 4176 & 4320 & 996956\\
				245 & 36 &  481915 &  3302874 & 481950 & 70315 & 10045 & 10290 & 3784825\\
				259 & 112 &  273911 &  636658 & 274022 & 117845 & 50505 & 50764 & 910681\\
				384 & 49 &  1330176 &  10472105 & 1330224 & 168960 & 21120 & 21504 & 11802330\\
				441 & 169 &  824229 &  2157538 & 824397 & 314874 & 119952 & 120393 & 2981936 \\
				444 & 75 &  1408960 &  8372840 & 1409034 & 237096 & 39516 & 39960 & 9781875\\
				450 & 121 &  1032750 &  3853917 & 1032870 & 276750 & 73800 & 74250 & 4886788 \\
				567 & 64 &  3219993 &  28617112 & 3220056 & 362313 & 40257 & 40824 & 31837169 \\
				588 & 57 &  3960964 &  40986739 & 3961020 & 382788 & 36456 & 37044 & 44947760\\
				800 & 81 &  7048000 &  69767271 & 7048080 & 712000 & 71200 & 72000 & 76815352\\
				\bottomrule
		\end{tabular}}
		\caption{Feasible parameters for a $Y$-graph with $\mu\leq 800$.}\label{tab-1}
	\end{table}
	
	{From the proof of Theorem~\ref{lem-33} we know that $\mu$ must be a positive integer and the parameters related to $Y$-graph $G$ are uniquely determined by $\mu$ and $s$. In Table~\ref{tab-1} we list the sets of feasible parameters obtained for $\mu\leq 800$. Every row contains $\mu$ (the eigenvalue in question), $s, t$ (the parameters related to the star complement), $|X|$ (the size of the corresponding star set), $r, \alpha, \beta, \gamma$ (the parameters of a putative $Y$-graph $G$) and $n$ (the order of $G$).
		
		However, we were not able to construct any $Y$-graph due to the fact that the corresponding parameters are comparatively large, and the smallest possible example has 1265 vertices. Clearly, the existence of a $Y$-graph would disprove Conjecture~\ref{con-1}.
		
	In what follows we eliminate the two particular cases in which $s\in\{2,3\}$.  The following corollaries can be deduced from the results of \cite{Wang,Yuan}. Here we give the short proofs that rely on the results of this paper.

	\begin{cor}[cf.~\cite{Yuan}]\label{Yuan-1}
		If an $r$-regular graph $G$ has the star complement $K_2\nabla tK_1$ for the eigenvalue $\mu\neq r$, {then $\mu=-t =2$ and  $G=\overline{3K_2}$.}
	\end{cor}
	\begin{proof}
		{If $\mu=-t$, the result follows from Theorem~\ref{lem-31}.} If for $\mu\neq -t$, we also have $t^2-4\mu^2t-4\mu^3=0$, then Lemma~\ref{lem-32} tells us that there is no graph satisfying the assumptions of this corollary. If $t^2-4\mu^2t-4\mu^3\not=0$, then $G$ is a $Y$-graph with $\alpha=\mu^2+\frac{2\mu^2}{\mu-1}$, which yields $\frac{2\mu^2}{\mu-1}=\alpha-\mu^2\in \mathbb{Z}$. Note that $\mu$ and $\mu-1$ are coprime, which implies that $2$ is divisible by $\mu-1$, necessarily $\mu\in\{2, 3\}$. However, we get $|X|=57/2$ for $\mu=2$ and $|X|=247/3$ for $\mu=3$,
		a contradiction.
	\end{proof}

	\begin{cor}[\cite{Wang}]\label{Wang-1}
		If an $r$-regular graph $G$  has the star complement $K_3\nabla tK_1$ for the eigenvalue $\mu\neq r$, {then $\mu=-t=2$ and $G=\overline{4K_2}$}.
	\end{cor}

\begin{proof} As before, the case $\mu=-t$ is settled by Theorem~\ref{lem-31}. For otherwise, $G$ is a $Y$-graph such that
	$$\left\{\begin{aligned}
		\alpha=&\,\mu^2+\frac{3\mu^2}{\mu-2}=\mu^2+3\mu+6+\frac{12}{\mu-2} ,\\
		|X|=&\,\frac{\big(\mu(\mu+1)^2+(\mu-2)^2\big)\big(3+(\mu-1)(\mu+1)^2+(\mu-2)^2\big)}{9\mu(\mu-2)}.
	\end{aligned}\right.$$
	Since $0\leq\alpha\in \mathbb{Z}$ we have $\frac{12}{\mu-2}\in \mathbb{Z}$, i.e. $\mu\in\{1,3,4,5,6,8\}$.
	However, $|X|$ is not a positive integer for any possible $\mu$, and we are done.
\end{proof}

By virtue of Corollaries~\ref{Yuan-1} and~\ref{Wang-1}, we conclude that Conjecture~\ref{con-1} holds for {$\mu\neq -t$}, $t^2-4\mu^2-4\mu^3\not=0$ and $s\in\{2,3\}$.

\section{Relation with block designs}\label{sec:rel}

By the foregoing results, if there is a graph for which Conjecture~\ref{con-1} does not hold, then this graph must be a $Y$-graph defined upon Theorem~\ref{lem-33}. Here we show that its existence depends on the existence of a 2-class block design formed as below.

		Let $G$ be a $Y$-graph and set  $T=X\cup V(tK_1)$.  To construct $G$ it is sufficient to construct its subgraph $G_T$ induced by $T$. {In fact, the existence of $G_T$ depends on the existence of a block design $\mathcal{D}=(X, \mathcal{B})$ whose points are identified with the vertices of $X$, while blocks are determined by the vertices of $tK_1$ in such a way that a point of $X$ belongs to a block of $\mathcal{B}$ if and only if the corresponding vertices are adjacent. If so, then $\mathcal{D}$ has the following parameters. The number of points $|X|$, the block size $k=|N(w)|$ and the number of blocks $t$ are given in \eqref{2set}. The replication (i.e.~the number of occurrences of every point) is $\alpha$, two points joined by an edge occur together in $\beta$ blocks and two non-adjacent points occur together in $\gamma$ blocks, where these parameters are given in \eqref{1set}.  (According to the terminology for block designs, since two points are allowed to occur together in $\alpha$ or $\beta$  blocks, the corresponding design is said to be a \textit{$2$-class block design}.)

		 	Observe that the subgraph $G[X]$ induced by $X$ is regular with vertex degree $r-\alpha=s-(\mu+1)+\frac{\mu(\mu+1)^2}{s}$. Moreover, if $N$ is the incidence matrix whose rows and columns are indexed by $X$ and  $\mathcal{B}$, then
		 	\begin{equation}\label{e:I}
		 		NN^\intercal=(\alpha-\gamma) I +(\beta-\gamma) A(G[X])+\gamma J.
		 	\end{equation}
	 	It is not difficult to see that $G[X]$ is a strongly regular graph if and only if $\mathcal{D}$ is the so-called symmetric 2-class partial incomplete block design \cite[Subsection~3.8.2]{Zoran1}. In this case,  the identity \eqref{e:I} leads to the conclusion that $NN^\intercal$ has exactly 3 eigenvalues.
	 	Moreover, by the same reference, in this particulars case we have an additional condition:
	 	\begin{equation}\label{ac}(r-\alpha)(\beta-\gamma)=\alpha (k-1)-\gamma(|X|-1).\end{equation}

	\begin{example}\label{exm-1} Suppose that $G$ is a $Y$-graph that corresponds to the first row of Table~\ref{tab-1}. Then $G[X]$ is a 219-regular graph (since $|N_{X}(u)|=r-|N_{tK_1}(u)|-|N_{K_s}(u)|=219$) and the existence of $\mathcal{D}$ is conditioned by the existence of such a graph satisfying $NN^{T}=81I-9A(G[X])+54J$. By \eqref{ac} we conclude that $G[X]$ cannot be strongly regular. The search for other possibilities in case of this or any other set of feasible parameters at this moment remains open.
	\end{example}

\section*{Acknowledgements}
This work is supported by the National Natural Science Foundation of China (Grants 11971274, 11531011 and 11671344) and the Serbian Ministry of Education, Science and Technological Development via the Faculty of Mathematics, University of Belgrade.

We are grateful to the referees for their many helpful comments and suggestions, which have improved the presentation of the paper.

\section*{Data availibility} 
Data sharingnot applicable to this article as no datasets were generated or analyzed during the current study.

\end{document}